\newtheorem{theorem}{Theorem} 
\newtheorem{lemma}[theorem]{Lemma}
\newtheorem{prop}[theorem]{Proposition}
\theoremstyle{definition}
\renewcommand{\mod}[1]{{\ifmmode\text{\rm\ (mod~$#1$)}\else\discretionary{}{}{\hbox{ }}\rm(mod~$#1$)\fi}}
\newcommand{\ep}{\varepsilon}
\newcommand{\Bval}{\lfloor \frac32\log_2 x \rfloor}
\newcommand{\A}{{\mathcal A}}
\newcommand{\N}{{\mathbb N}}
\renewcommand{\S}{{\mathcal S}}
\begin{document}

\title{Primitive sets with large counting functions}
\author[Greg Martin and Carl Pomerance]{Greg Martin and Carl Pomerance \\ Draft: \today}

\subjclass[2000]{11B05}
\dedicatory
{Dedicated to Andr\'as S\'ark\"ozy on his 70th birthday}
\maketitle

\begin{abstract}
A set of positive integers is said to be primitive if no element of the set 
is a multiple of another. If $\S$ is a primitive set and $S(x)$ is 
the number of elements of $\S$ not exceeding $x$, then a result of 
Erd\H os implies that $\int_2^\infty( S(t)/t^2\log t) \,dt$ converges. 
We establish an approximate converse to this theorem, showing that if 
$F$ satisfies some mild conditions and $\int_2^\infty( F(t)/t^2\log t) \,dt$ 
converges, then there is a primitive set $\S$ with $S(x)\asymp F(x)$.
\end{abstract}


\section{Introduction}
\label{intro}

A set of positive integers is {\em primitive} if no element of 
the set is a multiple of another.   In the 1930s Chowla, Davenport, and
Erd\H os independently studied a special primitive set, namely
the set of primitive nondeficient numbers 
(numbers $n$ such that the sum of the proper divisors of $n$ is at least $n$,
but no proper divisor of $n$ has this property), which probably inspired
the generalization to general primitive sets around the same time.
Besicovitch~\cite{Be} showed, perhaps unexpectedly, that the upper asymptotic density
of a primitive set can be arbitrarily close to $1/2$; his construction yields a set whose counting function is occasionally large but usually extremely small. In~\cite{EP35}, Erd\H os showed that the lower asymptotic density of a primitive set must be~0, and also that
\begin{equation}
\label{eq-nlogn}
\sup_{\S~{\rm primitive}}\sum_{n\in\S\setminus\{1\}}\frac1{n\log n}<\infty.
\end{equation}
It is thought that this supremum is attained
when $\S$ is the set of primes, but this is still not known. 
Further references to results on primitive sets can be found 
in~\cite{HR},~\cite[Section~5.1]{PS}, and~\cite[Section~5]{Ru}. 

In this note we ask if there are primitive sets with consistently large 
counting functions (as opposed to occasionally large counting functions, as in 
Besicovitch's example).  We show that essentially any smoothly growing
counting function that is consistent with the necessary convergence~\eqref{eq-nlogn} 
can be the order of magnitude for the counting function of a primitive set.

A favorite problem of Erd\H os, as related in \cite{ESS}, is as follows:
If $1<b_1<b_2<\dots$ is a sequence of numbers with $\sum1/b_n\log b_n<\infty$,
must there exist a primitive sequence $1<a_1<a_2<\dots$ with $a_n\ll b_n$?
One may interpret our principal result as answering ``yes" for smoothly growing
sequences $\{b_n\}$.

For a set $\S$ of natural numbers, let $S(x)$ denote its counting function;
that is, $S(x)$ is the number of members of $\S$ not exceending $x$.
Let $\log_1 x = \max\{1,\log x\}$ and $\log_\ell x = \log_1(\log_{\ell-1} x)$ 
for every integer $\ell\ge2$.

\begin{theorem}
\label{main}
Suppose that $L(x)$ is defined, positive, and increasing for $x\ge2$,
that $L(2x)\sim L(x)$ as $x\to\infty$, and that
\begin{equation}
\label{eq-Lint}
\int_2^\infty\frac{dt}{t\log t\cdot L(t)}<\infty.
\end{equation}
Then there is a primitive set $\S$ such that
\begin{equation}
\label{eq-Sbound}
S(x) \asymp \frac x{\log_2 x \cdot \log_3 x \cdot L(\log_2 x)}
\end{equation}
for all sufficiently large~$x$.  In particular, for any integer $\ell\ge3$
and every real number $\ep>0$, there exists a primitive set $\S$ such that
\begin{equation}
\label{eq-logs}
S(x) \asymp \frac{x}{\log_2 x \cdots \log_{\ell-1} x \cdot (\log_\ell x)^{1+\ep}}
\end{equation}
for all sufficiently large $x$.
\end{theorem}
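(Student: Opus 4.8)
The plan is to produce $\S$ as a disjoint union $\bigcup_{i\ge 1}\S_i$ of layers, each layer $\S_i$ consisting of integers having a prescribed number of prime factors (counted with multiplicity) and lying in a prescribed range of sizes, with a further prescribed constraint on the small prime factors; the parameters defining the layers will be chosen recursively. Two structural features make $\S$ primitive. First, $\Omega$ is constant on each $\S_i$, so $\S_i$ is already primitive: $m,n\in\S_i$ with $m\mid n$, $m\ne n$ would force $\Omega(m)<\Omega(n)$. Second, the cross-layer constraints — a combination of disjointness of the size ranges with a monotonicity condition on the least prime factor (or, alternatively, disjointness of the sets of primes that support the layers) — are arranged so that no member of $\S_i$ can divide a member of $\S_{i'}$ for $i\ne i'$; since each member of each layer has a prime factor exceeding $1$, also $1\notin\S$. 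Granting this, $\S$ is primitive for any admissible choice of parameters, and the whole content of the theorem lies in choosing them so that \eqref{eq-Sbound} holds.

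The analytic input is classical. For counting the integers in a layer one uses the Sathe--Selberg theorem — or Landau's theorem in the regime where the number of prime factors is of order $\log_2 x$ — together with Mertens-type estimates of the shape $\#\{n\le x:\ P^-(n)>y\}\asymp x/\log y$. Applied to a layer $\S_i$ calibrated to the size scale $x$, these give a count $S_i(x)$ equal to $x$ times a product of slowly varying factors; choosing the number of prime factors near the relevant local mode of $\Omega$ and choosing the least-prime threshold appropriately makes this product $\asymp\log_2 x\cdot\log_3 x\cdot L(\log_2 x)$ throughout the window of values of $x$ for which the layer is responsible. The assumption $L(2x)\sim L(x)$ is exactly what guarantees that this order of magnitude is stable under bounded multiplicative changes of $x$, so that a single layer covers a whole window and consecutive layers match up at the seams.

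The remaining step — the recursive choice of parameters and the verification that $S(x)=\sum_i S_i(x)\asymp x/(\log_2 x\cdot\log_3 x\cdot L(\log_2 x))$ for all large $x$ — is the crux. One must arrange that the layers whose windows lie below $x$ together contribute $O$ of the main term (each contributes all of its finitely many members), that the layer active at $x$ contributes $\asymp x/(\log_2 x\cdot\log_3 x\cdot L(\log_2 x))$, and that the layers whose windows lie above $x$ contribute only lower order. This is where hypothesis \eqref{eq-Lint} is indispensable: by partial summation, $\int_2^\infty S(t)/(t^2\log t)\,dt$ for $S$ of the size \eqref{eq-Sbound} transforms, after the substitutions $u=\log t$ and $v=\log u$, into exactly the integral $\int dv/(v\log v\,L(v))$ of \eqref{eq-Lint}; thus \eqref{eq-Lint} is equivalent to the Erd\H os constraint \eqref{eq-nlogn}, and its finiteness is what both permits a consistent recursive choice of the least-prime thresholds and numbers of prime factors and forces the aggregate contribution of the infinitely many layers to be $O$ of the main term. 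I expect the main obstacle to be precisely this dovetailing: the Sathe--Selberg and Mertens estimates must be uniform enough in the scale and in the layer index that the windows glue with neither gaps nor overshoots, and the bookkeeping on the parameters must mesh quantitatively with the rate of convergence in \eqref{eq-Lint}.

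Finally, \eqref{eq-logs} follows by applying the theorem to an explicit $L$: a product of iterated logarithms carrying an exponent $1+\ep$ on its last factor (an exponent $\ep$ when $\ell=3$). One checks routinely that such an $L$ is positive and increasing with $L(2x)\sim L(x)$, and that \eqref{eq-Lint} holds for it by the classical convergence test for integrals of reciprocals of products of iterated logarithms; feeding this $L$ into \eqref{eq-Sbound} and using the identity $\log_m x=\log_{m-2}(\log_2 x)$ to simplify the iterated logarithms gives \eqref{eq-logs}.
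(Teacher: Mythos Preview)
Your outline shares the paper's high-level strategy --- layers indexed by $\Omega$, counted via Sathe--Selberg --- but misses the paper's key device and, as you yourself note, leaves the crux undone. The paper uses no size windows at all. Instead it fixes an increasing sequence of primes $p_1<p_2<\cdots$ with $\sum 1/p_k<\tfrac12$ and sets $\S_k=\{n:\Omega(n)=k,\ p_k\mid n,\ \gcd(n,p_1\cdots p_{k-1})=1\}$. The single ``tag'' prime $p_k$ makes primitivity of $\S=\bigcup_k\S_k$ a one-line observation (if $m\in\S_j$, $n\in\S_k$ with $j<k$, then $p_j\mid m$ but $p_j\nmid n$), and Sathe--Selberg gives directly
\[
S_k(x)\asymp \frac{x}{\log x}\,\frac{(\log_2 x)^{k-2}}{(k-2)!}\cdot\frac{1}{p_k}
\]
uniformly for $2\le k\le\tfrac32\log_2 x$. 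Summing over $k$ then yields $x/p_{\lfloor\frac32\log_2 x\rfloor}\ll S(x)\ll x/p_{\lfloor\frac12\log_2 x\rfloor}$. The hypothesis \eqref{eq-Lint} enters only to manufacture the primes: one takes $p_k$ to be roughly the $\lfloor kL(k)\rfloor$-th prime, so that $p_k\sim k\log k\cdot L(k)$ (here Lemma~\ref{bound on L lemma} is used to control $\log L(k)$), and then $p_{\lfloor c\log_2 x\rfloor}\asymp\log_2 x\cdot\log_3 x\cdot L(\log_2 x)$ by the slow variation of $L$.

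This sidesteps entirely the dovetailing obstacle you anticipate: there are no windows to glue, no recursive parameter choice, and the upper and lower bounds come from the same sum. Your proposed cross-layer mechanism (disjoint size ranges plus a monotone least-prime-factor condition) is left unspecified, and counting integers with both $\Omega(n)=k$ and $P^-(n)>y$ inside a window would require a genuinely two-parameter uniformity that the paper's construction avoids. Your parenthetical alternative, ``disjointness of the sets of primes that support the layers,'' is closer in spirit but still not the paper's idea: the point is not to separate the prime supports of the layers but to force one designated prime into each layer and forbid it from all later ones, which is exactly what makes both the primitivity and the counting transparent.
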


By taking $L(x)=(\log_2x)\cdots(\log_{\ell-3}x) (\log_{\ell-2}x)^{1+\ep}$,
we see that \eqref{eq-Sbound} implies \eqref{eq-logs} for $\ell\ge4$,
and the case $\ell=3$ follows by taking $L(x)=(\log x)^\ep$.
By an argument somewhat similar to our proof of Theorem~\ref{main},
Ahlswede, Khachatrian, and S\'ark\"ozy~\cite{AKS} gave a construction
for the lower bound in \eqref{eq-logs}
in the case $\ell=3$. Like the paper \cite{AKS}, our proof depends heavily on a result
of Sathe--Selberg on the fine distribution of integers with a given
number of prime factors.

It is not hard to see that the condition \eqref{eq-Lint} is necessary
in Theorem~\ref{main}.  Indeed, suppose $\S$ is a set of natural numbers
greater than 1 satisfying \eqref{eq-Sbound}, and suppose that $\sum_{n\in\S\setminus\{1\}}1/(n\log n)$
converges (as it must, by equation~\eqref{eq-nlogn}, for primitive sets $\S$).  Since
$$
\sum_{n\in\S\setminus\{1\}}\frac1{n\log n}=\int_2^\infty S(t)\left(\frac1{t^2\log t}
+\frac1{t^2\log^2t}\right)\,dt,
$$
it follows that
$$
\int_2^\infty \frac{S(t)}{t^2\log t}\,dt<\infty.
$$
Then \eqref{eq-Sbound} implies that
$$
\int_2^\infty \frac{dt}{t\log t\cdot\log_2t\cdot\log_3t\cdot L(\log_2t)}\,dt<\infty.
$$
Via a change of variables, we obtain \eqref{eq-Lint}.

Another question one might consider is what conditions on the distribution
of a set $\A$ of natural numbers forces $\A$ to have a large primitive subset.
It is not too difficult to see that if an infinite set $\A$ contains no 
primitive subset of size $k$, then $A(x)\ll k\log x$.  Indeed, if
$b_1<\dots<b_k$ are any $k$ consecutive elements in $\A$, that they
are not primitive forces some $b_i\mid b_j$ for $1\le i<j\le k$, so
that $b_k/b_1\ge2$.  On the other hand, the set $\A=\{m2^j:m<2k-1,j\ge0\}$
has no primitive subset of size $k$ and $A(x)\gg k\log x$.

At the other extreme, it is also
not difficult to see that if $\A$ has positive upper density, then it contains
a primitive subset also with positive upper density.  Indeed, 
any integer subset of
a dyadic interval $[x,2x)$ is primitive, and a set with positive upper density
must contain a fixed positive proportion $\delta$ of each dyadic interval 
$[x_i,2x_i)$ for some unbounded sequence $\{x_i\}$.  
The Besicovitch argument then goes over to show that $\A$ contains a primitive
subset of upper density arbitrarily close to $\delta/2$.

We address this subset question for a set of ``intermediate" density, namely it has
density~0, but an infinite reciprocal sum.  We prove the following
result.
\begin{theorem}
\label{theorem2}
There is a set $\A$ of natural numbers of asymptotic density $0$ satisfying
\begin{equation}
\label{eq-sums}
\sum_{a\in\A\setminus\{1\}}\frac1{a\log a}<\infty 
\quad\text{and}\quad \sum_{a\in\A}\frac1a=\infty,
\end{equation}
such that for any primitive set $\S$ contained in $\A$ we have
\begin{equation}
\label{eq-primsum}
\sum_{s\in\S}\frac1s<\infty.
\end{equation}
\end{theorem}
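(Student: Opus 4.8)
The plan is to build $\A$ as a disjoint union of finite ``blocks'' that are stacked by divisibility, so that no primitive subset can meet more than one block. Concretely, I would define moduli $g_1<g_2<\cdots$ recursively by $g_1=2$ and
\begin{equation*}
N_k:=\lfloor e^{g_k}\rfloor,\qquad g_{k+1}:=\operatorname{lcm}\{\,jg_k:1\le j\le N_k\,\}=g_k\operatorname{lcm}(1,2,\dots,N_k),
\end{equation*}
and then set $\A_k=\{g_k,2g_k,\dots,N_kg_k\}$ and $\A=\bigcup_{k\ge1}\A_k$. The reason for taking the block length $\approx e^{g_k}$ is calibration of the reciprocal sum: since $\sum_{j\le N_k}1/j=\log N_k+\gamma+O(1/N_k)=g_k+O(1)$,
\begin{equation*}
\sum_{a\in\A_k}\frac1a=\frac1{g_k}\sum_{j\le N_k}\frac1j=1+O\!\left(\frac1{g_k}\right),
\end{equation*}
so each block contributes a little more than $1$ — but at most $2$, say — to any reciprocal sum, and there will be infinitely many blocks.

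Next I would establish the structural fact that \emph{every element of $\A_k$ divides every element of $\A_{k'}$ whenever $k'>k$}. This is immediate by induction: $g_k\mid g_{k+1}$ by construction, so $g_k\mid g_{k'}$ for all $k'\ge k$; and $jg_k$ with $j\le N_k$ divides $g_k\operatorname{lcm}(1,\dots,N_k)=g_{k+1}$, hence divides $g_{k'}$, hence divides $j'g_{k'}$ for every element $j'g_{k'}\in\A_{k'}$. (The blocks are genuinely disjoint because $g_{k+1}=g_k\operatorname{lcm}(1,\dots,N_k)>N_kg_k$.) Consequently a nonempty primitive set $\S\subseteq\A$ must lie inside a single block $\A_k$: otherwise it would contain two distinct elements, one from $\A_k$ and one from $\A_{k'}$ with $k<k'$, the first dividing the second. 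Hence
\begin{equation*}
\sum_{s\in\S}\frac1s\le\sum_{a\in\A_k}\frac1a=1+O\!\left(\frac1{g_k}\right)\le2,
\end{equation*}
which is \eqref{eq-primsum}, in fact with a bound uniform in $\S$.

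The remaining properties of $\A$ are then routine, because $g_k$ grows extremely fast. First, $\sum_{a\in\A}1/a=\sum_k\big(1+O(1/g_k)\big)=\infty$ since there are infinitely many blocks. Second, from $\log(jg_k)\ge\log g_k$ we get $\sum_{a\in\A_k\setminus\{1\}}\frac1{a\log a}\le\frac1{g_k\log g_k}\sum_{j\le N_k}\frac1j\le\frac2{\log g_k}$; and since $\operatorname{lcm}(1,\dots,N_k)=e^{(1+o(1))N_k}$, we have $\log g_{k+1}\ge(1+o(1))N_k\gg e^{g_k}$, so $\log g_k$ grows at least like a tower of exponentials and $\sum_k1/\log g_k<\infty$, giving \eqref{eq-sums}. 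Third, for density: if $g_k\le x$ with $k$ chosen maximal, then $A(x)\le\sum_{i<k}N_i+\lfloor x/g_k\rfloor\le kN_{k-1}+x/g_k$, while $x\ge g_k=g_{k-1}\operatorname{lcm}(1,\dots,N_{k-1})\gg g_{k-1}e^{N_{k-1}/2}$, so $A(x)/x\le 1/g_k+kN_{k-1}/(g_{k-1}e^{N_{k-1}/2})\to0$; thus $\A$ has asymptotic density $0$.

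I do not expect a real obstacle in the verification: essentially all the content is in the shape of the construction. The point worth flagging is why the obvious first attempt fails. If the blocks $\A_k$ were placed ``independently'' — in disjoint ranges with no forced divisibility between them — a primitive set could grab a large primitive subset of every block at once, so one would need $\sum_k f(\A_k)<\infty$, where $f(\A_k)$ is the maximal reciprocal sum of a primitive subset of $\A_k$; but $f(\A_k)$ is at least the reciprocal sum of $\A_k$ divided by the length of the longest divisibility chain in $\A_k$, and arranging such a large gap (so that $\sum_k f(\A_k)<\infty$ while $\sum_k\sum_{a\in\A_k}1/a=\infty$) does not appear to be possible. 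Stacking the blocks by divisibility sidesteps this entirely: it confines any primitive subset to a single block, decoupling the global reciprocal sum (infinite, roughly one unit per block) from what any one primitive subset can collect (bounded).
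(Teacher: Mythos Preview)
Your construction is correct and takes a genuinely different route from the paper's. The paper sets $\A=\bigcup_j\{a:2^{2^j}<a\le2^{2^{j+1}},\ 2^j\parallel a\}$, giving $A(x)\asymp x/\log x$; for a primitive $\S\subseteq\A$ it shows that the map $s\mapsto s^\circ$ to the largest odd divisor is injective with image $\S^\circ$ again primitive and $s^\circ\log s^\circ\ll s$, and then invokes Erd\H os's theorem~\eqref{eq-nlogn} to conclude $\sum_{s\in\S}1/s\ll\sum_{s^\circ\in\S^\circ}1/(s^\circ\log s^\circ)<\infty$. Your argument is more elementary and entirely self-contained---no appeal to~\eqref{eq-nlogn} is needed---and it even yields the uniform bound $\sum_{s\in\S}1/s\le2$, since every primitive subset of your $\A$ is finite, trapped in a single block. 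The trade-off is density: the paper's $\A$ sits essentially at the threshold permitted by $\sum1/(a\log a)<\infty$, whereas your $\A$ is extremely sparse, its counting function frozen on the enormous gaps $(N_kg_k,\,g_{k+1}]$ between consecutive blocks.
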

In particular, no primitive subset of $\A$ has positive relative lower 
density in $\A$, despite the counting function of $\A$ being small enough to allow
the possibility.
The set $\A$ that we exhibit has the property that there is a primitive
subset of relative positive upper density,
so there remains a perhaps interesting problem:  Is there a set $\A$
with infinite reciprocal sum such that any primitive subset has relative
density 0 in $\A$?  Maybe the Besicovitch construction will show
such a set $\A$ does not exist.

\section{Constructing primitive sets from a sequence of primes}
\label{construction}

Let $p_1<p_2<\cdots$ be any infinite sequence of primes such that 
\[
 \sum_{j=1}^\infty \frac1{p_j} < \frac12.
\]
We need this sequence not to grow too quickly; for now we make only the restriction $p_j \ll j^2$.

Using the usual notation $\Omega(n)$ for the number of prime factors of $n$ counted with multiplicity, we define for any positive integer $k$
\[
 \S_k = \{ n\in\N\colon \Omega(n) = k,\, p_k\mid n,\, (p_1\cdots p_{k-1},n)=1 \},
\]
and we set
\[
 \S = \bigcup_ {k=1} ^\infty \S_k.
\]
We prove two results about $\S$: the first is that $\S$ is primitive and the
second is a lower bound for $S(x)$ (see Proposition~\ref{sum over k prop} below).

\begin{lemma}
\label{Sprim}
The set $\S$ is primitive.
\end{lemma}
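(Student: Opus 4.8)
The plan is to show directly that if $m,n\in\S$ with $m\mid n$, then $m=n$; this is precisely the assertion that $\S$ is primitive. By construction there are positive integers $j,k$ with $m\in\S_j$ and $n\in\S_k$. Since $\Omega$ is completely additive and $m\mid n$, we have $\Omega(m)\le\Omega(n)$, that is, $j\le k$. So it suffices to rule out $j<k$ and then to see that $j=k$ forces $m=n$.

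First I would handle the case $j<k$. Here $p_j$ occurs among the primes $p_1,\dots,p_{k-1}$. The membership $m\in\S_j$ gives $p_j\mid m$, and then $m\mid n$ forces $p_j\mid n$; but $n\in\S_k$ requires $(p_1\cdots p_{k-1},n)=1$, so $p_j\nmid n$, a contradiction. (When $k=1$ the condition $(p_1\cdots p_{k-1},n)=1$ is vacuous, being an empty product, but then $j<k=1$ is impossible, so nothing needs to be checked in that case.)

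It remains to treat $j=k$. Then $\Omega(m)=\Omega(n)=k$, and using complete additivity of $\Omega$ once more we get $\Omega(n/m)=\Omega(n)-\Omega(m)=0$, whence $n/m=1$ and $m=n$, as desired. This completes the plan.

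I do not expect a serious obstacle here: the argument is a short case analysis driven entirely by the two defining features of $\S_k$, namely the exact value of $\Omega$ and the coprimality to $p_1\cdots p_{k-1}$. The only points requiring a little care are the empty-product convention when $k=1$ and keeping straight the direction of the inequality $\Omega(m)\le\Omega(n)$. Note that this part of the construction uses nothing about the sizes of the $p_j$; the hypotheses $\sum_j 1/p_j<\tfrac12$ and $p_j\ll j^2$ enter only later, in the lower bound for $S(x)$.
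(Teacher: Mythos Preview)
Your proof is correct and follows essentially the same approach as the paper: use $\Omega$ to force $j\le k$, then use the coprimality condition $(p_1\cdots p_{k-1},n)=1$ to rule out $j<k$. The only cosmetic difference is that the paper folds your $j=k$ case into the opening observation that distinct $m\mid n$ forces $\Omega(m)<\Omega(n)$, whereas you handle it separately via $\Omega(n/m)=0$.
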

\begin{proof}
Note that if $m$ and $n$ are distinct positive integers and $m$ divides $n$, 
then $\Omega(m) < \Omega(n)$. Therefore if $\S$ were not primitive, then 
there would exist positive integers $j<k$ and integers $m\in\S_j$ and 
$n\in\S_k$ such that $m\mid n$. However, then $p_j$ would divide $m$ but not $n$, 
a contradiction. (Indeed, $\S$ is an example of a homogeneous set, in the terminology of~\cite{Zh}.)
\end{proof}

Let $\sigma_j(x)$ denote the number of positive integers $n\le x$ such that $\Omega(n)=j$. 

\begin{lemma}[Sathe--Selberg]
For any positive integer $j\le\Bval$,
\[
\sigma_j(x) = H_j(x)\left(1+O\left(\frac1{\log_2x}\right)\right)
\]
where
\[
H_j(x)=G\bigg( \frac{j-1}{\log\log x} \bigg) \frac x{\log x} \frac{(\log\log x)^{j-1}}{(j-1)!} 
~\hbox{ and }~
G(z) = \frac1{\Gamma(z+1)} \prod_p \bigg( 1-\frac zp \bigg)^{-1} \bigg( 1-\frac1p \bigg)^z.
\]
\label{SS lemma}
\end{lemma}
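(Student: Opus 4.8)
This is the classical theorem of Sathe and Selberg on integers with a prescribed value of $\Omega$, and the cleanest route is simply to quote it from the literature (the original papers, or the treatment of the Selberg--Delange method in Tenenbaum's book); let me nonetheless indicate how the proof runs. For $|z|<2$ put
\[
D(s,z)=\sum_{n=1}^\infty\frac{z^{\Omega(n)}}{n^s}=\prod_p\Big(1-\frac z{p^s}\Big)^{-1}=\zeta(s)^z\,\mathcal G(s,z),\qquad\mathcal G(s,z)=\prod_p\Big(1-\frac z{p^s}\Big)^{-1}\Big(1-\frac1{p^s}\Big)^z,
\]
where the Euler product for $\mathcal G(s,z)$ converges absolutely and is holomorphic and locally bounded for $\Re s\ge\tfrac12+\delta$ and $|z|$ in any bounded set, with $\mathcal G(1,z)=\Gamma(z+1)G(z)$. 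Applying the Selberg--Delange method to $D(s,z)$ --- Perron's formula, then shifting the contour onto a Hankel-type path around the branch point $s=1$ of $\zeta(s)^z$, using the classical zero-free region of $\zeta$ --- yields, uniformly for $|z|\le\tfrac32+\delta$,
\[
M(x,z):=\sum_{n\le x}z^{\Omega(n)}=z\,G(z)\,x(\log x)^{z-1}\Big(1+O\Big(\tfrac1{\log x}\Big)\Big)+O\big(x\exp(-c\sqrt{\log x})\big).
\]

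Granting this, $\sigma_j(x)$ is recovered as a Cauchy coefficient. Since $\Omega(n)\le\log_2 n$, the map $z\mapsto M(x,z)=\sum_{k\ge0}\sigma_k(x)z^k$ is a polynomial, so $\sigma_j(x)=\frac1{2\pi i}\oint_{|z|=r}M(x,z)z^{-j-1}\,dz$ for every $r>0$. I would take $r=z_0:=(j-1)/\log\log x$, which is bounded away from $2$ precisely because $j\le\Bval$, so $G$ is holomorphic on a neighbourhood of the disc $|z|\le z_0$; inserting the asymptotic for $M$ (the additive error being negligible next to $H_j(x)$, the relative error $O(1/\log x)$ uniform on the contour) gives
\[
\sigma_j(x)=\frac x{\log x}\cdot\frac1{2\pi i}\oint_{|z|=z_0}\frac{G(z)(\log x)^z}{z^{j}}\,dz\cdot\Big(1+O\Big(\tfrac1{\log x}\Big)\Big).
\]
Writing $(\log x)^z=e^{z\log\log x}$ and using $\frac1{2\pi i}\oint_{|z|=z_0}e^{z\log\log x}z^{-m}\,dz=(\log\log x)^{m-1}/(m-1)!$, one obtains the two \emph{exact} identities
\[
\frac1{2\pi i}\oint_{|z|=z_0}\frac{(\log x)^z}{z^{j}}\,dz=\frac{(\log\log x)^{j-1}}{(j-1)!},\qquad\frac1{2\pi i}\oint_{|z|=z_0}\frac{(z-z_0)(\log x)^z}{z^{j}}\,dz=0,
\]
the second holding precisely because $z_0=(j-1)/\log\log x$. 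Subtracting the degree-one Taylor polynomial of $G$ at $z_0$ therefore gives
\[
\frac1{2\pi i}\oint_{|z|=z_0}\frac{G(z)(\log x)^z}{z^{j}}\,dz=G(z_0)\frac{(\log\log x)^{j-1}}{(j-1)!}+\frac1{2\pi i}\oint_{|z|=z_0}\frac{\big(G(z)-G(z_0)-G'(z_0)(z-z_0)\big)(\log x)^z}{z^{j}}\,dz,
\]
and a routine saddle-point estimate of the last integral --- on $|z|=z_0$ the modulus of $(\log x)^z z^{-j}$ peaks at $z=z_0$ with Gaussian decay of width $\asymp(\log\log x)^{-1/2}$, while the bracket is $O(|z-z_0|^2)$ near the peak and bounded elsewhere --- bounds it by $O\big(\frac{z_0}{\log\log x}\cdot\frac{(\log\log x)^{j-1}}{(j-1)!}\big)$. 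As $z_0=O(1)$, combining everything yields $\sigma_j(x)=H_j(x)\big(1+O(1/\log_2 x)\big)$. (The very small values of $j$, where $z_0\log\log x$ fails to grow, are covered separately by Landau's classical asymptotic for bounded $j$, or absorbed into the implied constant for large $x$.)

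The main obstacle is the first step: proving the uniform asymptotic for $M(x,z)$ over a disc of radius $\tfrac32+\delta$. This is exactly where the Selberg--Delange machinery does its work --- the analytic continuation and classical zero-free region of $\zeta(s)$, the Hankel-contour estimates for $\zeta(s)^z$ near its branch point, and the bookkeeping needed to keep every error term uniform in $z$ while staying clear of the pole of $G$ at $z=2$, which is the reason the hypothesis caps $j$ at $\Bval$. By contrast the passage from $M(x,z)$ to $\sigma_j(x)$ is essentially mechanical, the one pleasant point being that the natural choice $r=(j-1)/\log\log x$ simultaneously makes the residue computation exact and annihilates the first-order correction term.
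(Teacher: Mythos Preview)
The paper does not actually prove this lemma; it simply cites Montgomery--Vaughan, Theorem~7.19. Your sketch of the Selberg--Delange argument is the standard proof that lies behind that citation, and the outline is correct: factor the Dirichlet series as $\zeta(s)^z$ times a function holomorphic past $\Re s=1$, push a Perron integral onto a Hankel contour to get the uniform asymptotic for $M(x,z)$, then extract the coefficient by Cauchy with radius chosen so that the linear Taylor term vanishes. The observation that $r=(j-1)/\log\log x$ kills the first-order correction is exactly the trick Selberg used.

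Two small remarks. First, when you write ``$\Omega(n)\le\log_2 n$'' you mean the base-$2$ logarithm, but in this paper $\log_2$ denotes the iterated logarithm $\log\log$; to avoid a clash you should write $\Omega(n)\le(\log n)/\log 2$. Second, your saddle width ``$\asymp(\log\log x)^{-1/2}$'' is only literally correct when $j\asymp\log\log x$; the actual Gaussian width on $|z|=z_0$ is $\asymp(j-1)^{-1/2}$, which still yields the claimed bound once one tracks the factors through (and, as you note, bounded $j$ are handled separately). Since the paper is content to cite the result, either replacing your sketch with the reference or keeping the sketch as motivation would be appropriate.
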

For a proof, see \cite[Theorem 7.19]{MV}.

\begin{lemma}
Let $x$ be a sufficiently large real number.
For any integer $k\in[2,\frac32\log_2x]$,
\[
 S_k(x) \asymp \frac x{\log x} \frac{(\log\log x)^{k-2}}{(k-2)!} \frac1{p_k},
\]
where the implied constants are absolute.
\label{Skx lemma}
\end{lemma}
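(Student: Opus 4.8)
The quantity $S_k(x)$ counts integers $n \le x$ with $\Omega(n) = k$, with $p_k \mid n$, and with $(p_1 \cdots p_{k-1}, n) = 1$. The natural approach is to write $n = p_k m$, so that $m \le x/p_k$, $\Omega(m) = k-1$, and $(p_1 \cdots p_{k-1}, m) = 1$ — with the one caveat that if $p_k^2 \mid n$ we should be careful, but since $p_k \le k^2$ and $k$ is at most about $\tfrac32 \log_2 x$, the prime $p_k$ is tiny compared with $x^{1/2}$, so the terms with $p_k^2 \mid n$ are a negligible fraction and one gets $S_k(x)$ is, up to a bounded factor, the number of $m \le x/p_k$ with $\Omega(m) = k - 1$ that avoid the primes $p_1, \dots, p_{k-1}$. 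Ignoring the coprimality condition for a moment, Lemma~\ref{SS lemma} (Sathe--Selberg) gives that the number of $m \le x/p_k$ with $\Omega(m) = k-1$ is $\asymp \frac{x/p_k}{\log(x/p_k)} \frac{(\log\log(x/p_k))^{k-2}}{(k-2)!}$, and since $p_k \le k^2 \ll (\log x)^2$ we have $\log(x/p_k) \asymp \log x$ and $\log\log(x/p_k) = \log_2 x + O(1)$; the latter shift inside the $(k-2)$-th power is harmless because $k - 2 \ll \log_2 x$, so $(\log_2 x + O(1))^{k-2} \asymp (\log_2 x)^{k-2}$. This already matches the claimed main term.

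**Handling the coprimality condition.** The remaining point is to show that restricting to $m$ coprime to $p_1 \cdots p_{k-1}$ only changes the count by a bounded factor. For the upper bound this is automatic (dropping a condition only increases the count). For the lower bound, I would argue that a positive proportion of the $m$ counted by Sathe--Selberg already avoid $p_1, \dots, p_{k-1}$: indeed one can instead \emph{construct} enough coprime $m$ directly. Write $m = q_1 \cdots q_{k-1}$ as a product of $k-1$ primes (with multiplicity), all chosen from the interval, say, $(y, y^{1+o(1)}]$ where $y$ is chosen so that $y^{k-1} \asymp x/p_k$; taking $\log y \asymp \frac{\log x}{k-1}$ and noting $k - 1 \le \tfrac32 \log_2 x$ gives $\log y \gg \log x / \log_2 x$, which is far larger than $\log p_{k-1} \asymp \log_2 x$, so all these primes automatically exceed $p_{k-1}$ and the coprimality is free. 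Counting such products via the prime number theorem in the relevant ranges — or more cleanly, applying Sathe--Selberg to integers built only from primes exceeding $p_{k-1}$ — recovers a lower bound of the same order of magnitude $\frac{x}{\log x} \frac{(\log\log x)^{k-2}}{(k-2)!} \frac1{p_k}$. The bookkeeping to make the two-sided bound clean is to fix the interval for $k$ as $[2, \tfrac32 \log_2 x]$ precisely so that $k$ never gets close enough to $\log_2 x$ for the Sathe--Selberg factor $G((k-2)/\log_2 x)$ to degenerate (it stays bounded away from $0$ and $\infty$ for $(k-2)/\log_2 x \in [0, \tfrac32 + o(1)]$), and so that all the $O(1)$ shifts inside powers and logarithms cost only absolute constants.

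**Main obstacle.** The genuinely delicate step is the lower bound for the coprime count, i.e.\ confirming that excluding the (few, small) primes $p_1, \dots, p_{k-1}$ does not kill a non-negligible fraction of the integers with $\Omega = k-1$. The cleanest route is probably not to sieve but to \emph{restrict from the start} to integers composed of primes from a dyadic-type range high enough to be automatically coprime to all $p_j$ with $j < k$, then apply a Sathe--Selberg-type estimate (or direct prime-counting) in that restricted setting; one must check that this restriction still produces $\asymp \frac{x}{\log x}\frac{(\log_2 x)^{k-2}}{(k-2)!}$ such integers up to $x/p_k$, uniformly for $k$ in the stated range. The uniformity in $k$ — in particular making sure the implied constants are genuinely absolute and do not secretly depend on how close $k$ is to $\tfrac32\log_2 x$ — is where one has to be most careful, and it is exactly why the hypothesis $p_j \ll j^2$ (hence $p_k \ll (\log x)^2$) and the cutoff $k \le \tfrac32 \log_2 x$ are imposed. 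Everything else — replacing $x$ by $x/p_k$ in the Sathe--Selberg formula, absorbing the $O(1)$ additive shifts, discarding the $p_k^2 \mid n$ terms — is routine and costs only bounded factors.
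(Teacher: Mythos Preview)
Your upper bound is exactly the paper's: both bound $S_k(x)\le\sigma_{k-1}(x/p_k)$ and feed this into Sathe--Selberg, absorbing the replacement of $x$ by $x/p_k$ as you describe. (Incidentally, your worry about $p_k^2\mid n$ is unnecessary: the map $n\mapsto n/p_k$ is a genuine bijection from $\S_k\cap[1,x]$ onto $\{m\le x/p_k:\Omega(m)=k-1,\ (p_1\cdots p_{k-1},m)=1\}$, with no multiplicity issue.)

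Your lower bound, however, diverges from the paper, and your route (a) has a real gap. If you take all $k-1$ prime factors of $m$ from a single short range near $y=(x/p_k)^{1/(k-1)}$, the number of such products is only of order
\[
\frac{1}{(k-1)!}\Big(\frac{y}{\log y}\Big)^{k-1}\asymp\frac{(k-1)^{k-1}}{(k-1)!}\cdot\frac{x/p_k}{(\log x)^{k-1}},
\]
which for $k\ge 3$ falls short of the target $\dfrac{x/p_k}{\log x}\dfrac{(\log_2 x)^{k-2}}{(k-2)!}$ by a factor comparable to $(\log x)^{k-2}$. The point is that a typical $m$ with $\Omega(m)=k-1$ has its prime factors spread over many scales, not clustered near $y$; concentrating them kills the count. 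Your route (b), invoking a Sathe--Selberg estimate for integers with all prime factors exceeding $p_{k-1}$, is viable in principle (since $\log\log p_{k-1}\ll\log_4 x=o(\log_2 x)$), but it needs a uniform restricted version of Lemma~\ref{SS lemma} that the paper does not supply.

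The paper's argument for the lower bound is different and cleaner, and it is where the hypothesis $\sum_j 1/p_j<\tfrac12$ (which you never use) actually enters. One writes the one-step sieve inequality
\[
S_k(x)\ \ge\ \sigma_{k-1}\!\Big(\frac{x}{p_k}\Big)\ -\ \sum_{j=1}^{k-1}\sigma_{k-2}\!\Big(\frac{x}{p_jp_k}\Big),
\]
applies Sathe--Selberg to every term, and observes that the subtracted sum is at most
\[
\frac{k-3}{\log_2 x}\Big(\sum_{j=1}^{k-1}\frac1{p_j}\Big)\,H_{k-1}\!\Big(\frac{x}{p_k}\Big)\big(1+O(1/\log_2 x)\big),
\]
which, because $\sum_j 1/p_j<\tfrac12$ and $k\le\tfrac32\log_2 x$, is strictly less than the main term. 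This gives the matching lower bound immediately from the unrestricted Sathe--Selberg formula, with no need to control integers free of small primes.
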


\begin{proof}
The result follows immediately from the prime number theorem in the case $k=2$,
so assume that $k\ge3$.
Since every element of $S_k(x)$ is divisible by $p_k$ and is coprime to
$p_1\dots p_{k-1}$, we have the inequalities
\[
\sigma_{k-1}\left(\frac{x}{p_k}\right)\ge
S_k(x) \ge \sigma_{k-1}\bigg( \frac x{p_k} \bigg) - \sum_{j=1}^{k-1} \sigma_{k-2}\bigg( \frac x{p_jp_k} \bigg).
\]
By Lemma~\ref{SS lemma}, this becomes
\begin{align*}
H_{k-1}\left(\frac{x}{p_k}\right)&\left(1+O\left(\frac1{\log_2(x/p_k)}\right)\right)
\ge S_k(x)\\
&\ge
\left(H_{k-1}\left(\frac{x}{p_k}\right)-\sum_{j=1}^{k-1}H_{k-2}\left(\frac{x}{p_jp_k}
\right)\right)\left(1+O\left(\frac1{\log_2(x/p_jp_k)}\right)\right).
\end{align*}
Because $k\ll\log_2 x$ and $p_j \ll j^2$, each occurrence of $\log(x/p_k)$
or $\log(x/p_jp_k)$ can be rewritten as $(\log x)(1 + O(1/\log_2 x))$, 
and similarly $\log_2(x/p_k)$ and $\log_2(x/p_jp_k)$
can be rewritten as $(\log_2 x)(1+O(1/\log x))$. 
In addition, the expressions $G((k-2)/\log_2(x/p_k))$ and
$G((k-3)/\log_2(x/p_jp_k))$ can be rewritten as 
$$
G\left(\frac{k-2}{\log_2 x} + O\left(\frac1{\log_2 x}\right)\right)=
G\left(\frac{k-2}{\log_2 x}\right)\left(1+O\left(\frac1{\log_2 x}\right)\right),
$$
since $\log G(z)$ is analytic 
and hence has a bounded first derivative in a neighborhood of the interval $[0,3/2]$.
Therefore
\begin{align*}
H_{k-1}\left(\frac{x}{p_k}\right)&\left(1+O\left(\frac1{\log_2x}\right)\right)
\ge S_k(x)\\
&\ge
H_{k-1}\left(\frac{x}{p_k}\right)
\left(1-\frac{k-3}{\log_2x}\sum_{j=1}^{k-1}\frac1{p_j}\right)
\left(1+O\left(\frac1{\log_2x}\right)\right).
\end{align*}
Since the sum is less than $\frac12$, and since $G(z)$ is bounded away from~0 
and~$\infty$ on the interval $[0,3/2]$, this becomes
\[
S_k(x) \asymp H_{k-1}\left(\frac{x}{p_k}\right)
\asymp \frac x{\log x} \frac{(\log\log x)^{k-2}}{(k-2)!} \frac1{p_k}
\]
as claimed.
\end{proof}

\begin{prop}
For $x\ge p_1$, we have $x/p_B\gg S(x) \gg x/p_{B'}$, where 
$B=B(x)=\lfloor\frac12\log_2x\rfloor$ and $B'=B'(x)= \Bval$.
\label{sum over k prop}
\end{prop}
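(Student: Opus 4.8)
The plan is to write $S(x)=\sum_{k\ge1}S_k(x)$ and to evaluate the sum with the help of Lemma~\ref{Skx lemma}, which says that for every integer $k$ with $2\le k\le B'$ the term $S_k(x)$ is, up to absolute constants, the Poisson-type quantity
\[
T_k:=\frac{x}{\log x}\cdot\frac{(\log_2 x)^{k-2}}{(k-2)!}\cdot\frac1{p_k}.
\]
The weights $(\log_2 x)^{k-2}/(k-2)!$ concentrate around $k=\log_2 x$, and the crucial point is that $B=\lfloor\frac12\log_2 x\rfloor$ and $B'=\lfloor\frac32\log_2 x\rfloor$ straddle this peak --- at one half and three halves of the mean --- while $B'$ still lies within the range where Lemma~\ref{Skx lemma} is valid. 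We may assume $x$ is large enough that $1\le B<B'\le\frac32\log_2 x$; we shall treat $S_1(x)$ trivially and $\sum_{k>B'}S_k(x)$ via a standard tail bound for $\Omega$, and we shall use throughout that the hypothesis $p_j\ll j^2$ forces $p_B\ll(\log_2 x)^2$, which is smaller than every fixed positive power of $\log x$.

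For the lower bound I would discard all terms except those with $2\le k\le B'$ and then pull out the smallest reciprocal prime, using that $(p_k)$ is increasing:
\[
S(x)\ \ge\ \sum_{k=2}^{B'}S_k(x)\ \asymp\ \sum_{k=2}^{B'}T_k\ \ge\ \frac{x}{p_{B'}\log x}\sum_{j=0}^{B'-2}\frac{(\log_2 x)^{j}}{j!}.
\]
Since $B'-2>\log_2 x$, the partial exponential sum is $\gg e^{\log_2 x}=\log x$: the omitted tail $\sum_{j>B'-2}(\log_2 x)^j/j!$ is the upper tail of a Poisson law of mean $\log_2 x$ past $\frac32\log_2 x$, and Rankin's trick bounds it by $(\log x)^{1-\delta}$ for some fixed $\delta>0$. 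Hence $S(x)\gg x/p_{B'}$.

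For the upper bound I would split $S(x)=S_1(x)+\sum_{2\le k\le B'}S_k(x)+\sum_{k>B'}S_k(x)$. The first piece is at most $1$. In the middle sum I would split at $k=B$: for $B\le k\le B'$ one has $p_k\ge p_B$, so $\sum_{B\le k\le B'}S_k(x)\ll\frac{x}{p_B\log x}\sum_{j\ge0}(\log_2 x)^j/j!=x/p_B$; for $2\le k<B$ one bounds $1/p_k\le1$ and observes that $\sum_{j\le B-3}(\log_2 x)^j/j!$ is the lower tail of a Poisson law of mean $\log_2 x$ below $\frac12\log_2 x$, hence is $\ll(\log x)^{1-c}$ for some fixed $c>0$ (Rankin again), so $\sum_{2\le k<B}S_k(x)\ll x(\log x)^{-c}$. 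Finally $\sum_{k>B'}S_k(x)\le\#\{n\le x:\Omega(n)>\frac32\log_2 x\}$, which is $\ll x(\log x)^{-c'}$ for some fixed $c'>0$ by the classical large-deviation bound for $\Omega$ (for instance, from the standard estimate $\sum_{n\le x}(3/2)^{\Omega(n)}\ll x\sqrt{\log x}$ together with Rankin's trick). As $p_B\ll(\log_2 x)^2$ is dominated by $(\log x)^{\min(c,c')}$, each of these error terms is $\ll x/p_B$, and we conclude that $S(x)\ll x/p_B$.

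The substance of the argument is the bookkeeping of the sum over $k$: one must recognise that the whole of $S(x)$ is governed by the window $B\le k\le B'$, and that both tail ranges $k<B$ and $k>B'$ contribute only $x(\log x)^{-\eta}$ for a fixed $\eta>0$ --- a genuine power-of-$\log x$ saving, which is exactly what is needed to absorb the factor $p_B$. The range $k>B'$ is the one place where we leave the scope of Lemma~\ref{Skx lemma} and must appeal to a standard tail estimate for $\Omega(n)$, and the constants $\frac12$ and $\frac32$ are precisely those that keep both side-tails power-saving while holding $k$ inside the Sathe--Selberg range. I expect this bookkeeping, rather than any individual estimate, to be the main thing to get right.
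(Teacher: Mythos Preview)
Your proposal is correct and follows essentially the same decomposition and estimates as the paper's proof: both use Lemma~\ref{Skx lemma} on the central range, pull out $1/p_{B'}$ (resp.\ $1/p_B$) for the lower (resp.\ upper) bound, and control the tails $k<B$ and $k>B'$ by power-of-$\log x$ savings that beat $p_B\ll(\log_2 x)^2$. The only cosmetic difference is that for the small-$k$ tail the paper bounds $\sum_{k\le B}S_k(x)$ by $\#\{n\le x:\Omega(n)\le B\}$ and invokes Hardy--Ramanujan, whereas you stay inside Lemma~\ref{Skx lemma} and bound the Poisson lower tail via Rankin; both routes give the same $x/(\log x)^c$ bound.
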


\begin{proof}
Since $\S = \bigcup_{k=1}^\infty \S_k$ is a disjoint union, 
we have by Lemma~\ref{Skx lemma},
$$
S(x) \ge \sum_{k=2}^{B'} S_k(x) 
\gg \sum_{k=2}^{B'} \frac x{\log x} \frac{(\log_2 x)^{k-2}}{(k-2)!}\frac1{p_k}
\ge \frac x{\log x} \frac1{p_{B'}} \sum_{k=2}^{B'} \frac{(\log_2 x)^{k-2}}{(k-2)!}
\gg \frac{x}{p_{B'}},
$$
where we used the inequality
$$
\sum_{j=0}^{\lfloor y\rfloor}\frac{y^{j}}{j!}\gg e^y
$$
(which follows from \cite[equation 1.10]{No} with $\beta=0$) for the last step.
For the upper bound, we have
\[
S(x)\le\sum_{k=1}^\infty S_k(x)\le\sum_{k=B+1}^{B'}S_k(x)
+\sum_{\substack{n\le x\\\Omega(n)\le B}}1
+\sum_{\substack{n\le x\\\Omega(n)>B'}}1.
\]
There is a positive constant $c$ such that the last two sums here are 
$O(x/(\log x)^{c})$.  Indeed, $\Omega(n)\le B$ implies that $\omega(n)\le B$,
where $\omega$ counts the number of distinct prime divisors, so the estimate
for $\Omega(n)\le B$ follows from the Hardy--Ramanujan inequality
(see~\cite[Proposition 3]{EN}).
If $\Omega(n)>B'$, a similar estimate holds using the Hardy--Ramanujan
inequality plus an estimate for those $n$ with $\Omega(n)-\omega(n)$ large,
or more directly from~\cite[Lemma 13]{LP}.

By Lemma~\ref{Skx lemma},
\[
\sum_{k=B+1}^{B'}S_k(x)\ll\sum_{k=B+1}^{B'}\frac{x}{\log x}\frac{(\log_2x)^{k-2}}{(k-2)!}
\frac1{p_B}
\le\frac{x}{p_B}\sum_{j=0}^\infty\frac{(\log_2x)^j}{j!\log x}=\frac{x}{p_B}.
\]
Since $p_B\le p_{B'}=O(B'^2)=O((\log_2x)^2)$, sets of size $O(x/(\log x)^{c})$
are negligible, and our result follows.
\end{proof}

\section{Proof of Theorem \ref{main}}

\begin{lemma}
\label{bound on L lemma}
Suppose that $L(x)$ is defined, positive, and increasing for $x\ge2$ and that
$L(2x)\sim L(x)$ as $x\to\infty$. Then $L(x) \ll_\ep x^\ep$ for any $\ep>0$.
\end{lemma}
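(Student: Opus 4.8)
The plan is to exploit the hypothesis $L(2x)\sim L(x)$ to show that $L$ grows at most like an arbitrarily small power of $x$. First I would fix $\ep>0$ and choose $\delta>0$ small enough that $\log_2(1+\delta)<\ep$. By the assumption $L(2x)\sim L(x)$, there is a threshold $x_0=x_0(\delta)\ge 2$ such that $L(2x)\le(1+\delta)L(x)$ for all $x\ge x_0$.

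Next I would iterate this inequality: for every integer $n\ge0$ one gets $L(2^n x_0)\le(1+\delta)^n L(x_0)$ by an immediate induction. Then for an arbitrary real $x\ge x_0$, choosing the integer $n\ge0$ with $2^n x_0\le x<2^{n+1}x_0$ and using that $L$ is increasing yields
$$
L(x)\le L\bigl(2^{n+1}x_0\bigr)\le(1+\delta)^{n+1}L(x_0).
$$
Since $2^n\le x/x_0$ we have $n\le\log_2(x/x_0)$, and therefore
$$
(1+\delta)^{n+1}\le(1+\delta)\,(1+\delta)^{\log_2(x/x_0)}=(1+\delta)\,(x/x_0)^{\log_2(1+\delta)}\le(1+\delta)\,(x/x_0)^\ep,
$$
so that $L(x)\le(1+\delta)L(x_0)x_0^{-\ep}\,x^\ep$ for all $x\ge x_0$. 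For $2\le x<x_0$ one simply bounds $L(x)\le L(x_0)$; absorbing the finitely many constants $\delta$, $x_0$, $L(x_0)$ into the implied constant gives $L(x)\ll_\ep x^\ep$, as desired.

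There is no genuine obstacle here; the only point requiring a little care is the bookkeeping of quantifiers, since the threshold $x_0$ — and hence the implied constant — depends on $\ep$ through the choice of $\delta$, which is precisely what the notation $\ll_\ep$ permits. It is also worth noting explicitly that the monotonicity of $L$ is exactly what allows the passage from the dyadic sample points $2^n x_0$ to all real $x\ge x_0$.
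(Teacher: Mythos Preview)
Your proof is correct and follows essentially the same approach as the paper: both exploit an inequality $L(2x)\le(1+\delta)L(x)$ with $1+\delta<2^\ep$ and iterate dyadically. The only difference is cosmetic---the paper packages the iteration by showing that $M_u:=\max_{u\le x\le 2u}L(x)/x^\ep$ satisfies $M_{2u}<M_u$ (using the specific choice $\delta=\ep\log 2$), while you compute the iterated bound $L(2^n x_0)\le(1+\delta)^n L(x_0)$ directly and then interpolate via monotonicity.
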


\begin{proof}
Given $\ep>0$, we need to show that $L(x)/x^\ep$ is bounded. Since $L(2x)\sim L(x)$, we may choose $x_1$ such that $L(2x) < (1+\ep\log2) L(x)$ for all $x\ge x_1$. Define $M_u = \max_{u\le x\le 2u} L(x)/x^\ep$. Then for any $u\ge x_1$,
\[
M_{2u} = \max_{2u\le x\le 4u} \frac{L(x)}{x^\ep} = \max_{u\le y\le 2u} \frac{L(2y)}{(2y)^\ep} < \frac{1+\ep\log 2}{2^\ep} \max_{u\le y\le 2u} \frac{L(y)}{y^\ep} < 1\cdot M_u,
\]
since $2^\ep > 1+\ep\log 2$. Therefore $M_{x_1} > M_{2x_1} > M_{4x_1} > \cdots$, 
and so $L(x)/x^\ep$ is bounded by $M_{x_1}$ on $[x_1,\infty)$. 
Since it is clearly bounded by $L(x_1)$ on $[2,x_1]$, the lemma is established.
\end{proof}

\begin{prop}
\label{exist prime sequence prop}
Suppose that $L(x)$ is defined, positive, and increasing for $x\ge2$, that
$L(2x)\sim L(x)$ as $x\to\infty$, and that
\begin{equation*}
\int_2^\infty\frac{dt}{t\log t\cdot L(t)}<\infty.
\end{equation*}
Then there is a sequence $p_1<p_2<\cdots$ of primes with $\sum_{k=1}^\infty 1/p_k<1/2$ and 
$p_k \sim k \log k \cdot L(k)$ as $k\to\infty$.
\end{prop}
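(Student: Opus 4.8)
The plan is to build the primes $p_k$ recursively, at each stage taking $p_k$ to be the least prime at or above the target value $f(k):=k\log k\cdot L(k)$ (while keeping the sequence increasing), and then to discard an initial segment so as to force the reciprocal sum below $1/2$. Before the construction I would record the consequences of the hypotheses that make everything work. First, $L$ is unbounded (were it bounded, $\int_2^\infty dt/(t\log t\cdot L(t))$ would diverge by comparison with $\int_2^\infty dt/(t\log t)$); since $L$ is also increasing, $L(k)\to\infty$, and hence $k\log k=o(f(k))$. Second, by Lemma~\ref{bound on L lemma} $L(k)=k^{o(1)}$, so $f(k)=k^{1+o(1)}$ and $\log f(k)\sim\log k$. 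Third, $f$ is increasing, being a product of positive increasing functions, so $t\mapsto 1/f(t)$ is decreasing.

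Now define $p_k$ for $k\ge2$ recursively by letting $p_k$ be the least prime with $p_k\ge\max\{f(k),\,p_{k-1}+1\}$ (taking $p_1$ to be any prime; the small-index values are immaterial, being discarded below). Then the $p_k$ form a strictly increasing sequence of primes with $p_k\ge f(k)$, so the crux is the matching bound $p_k\le(1+o(1))f(k)$. Fix large $k$ and let $k_0=k_0(k)\le k$ be the largest index at which ``the $f$-term wins,'' i.e.\ $f(k_0)\ge p_{k_0-1}+1$; such indices occur infinitely often, since otherwise the sequence would eventually coincide with the ordinary sequence of primes, giving $p_k\sim k\log k$, contrary to $p_k\ge f(k)$. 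Then $p_{k_0}$ is the least prime $\ge f(k_0)$, while $p_{k_0+1},\dots,p_k$ are the next $k-k_0$ primes after $p_{k_0}$. By the prime number theorem the largest gap between consecutive primes below $x$ is $o(x)$, so $p_{k_0}=(1+o(1))f(k_0)\le(1+o(1))f(k)$; and from $p_m\ll m\log m$, together with $\log f(k)\sim\log k$, one gets the crude bound $p_k\ll f(k)\log k$ and hence $\log p_k\ll\log k$. Since $p_k$ is the $(k-k_0)$-th prime after $p_{k_0}$, the prime number theorem also gives $p_k-p_{k_0}=o(f(k))$ (using $k-k_0\le k$, $\log p_k\ll\log k$, and $k\log k=o(f(k))$). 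Adding the last two displays yields $p_k\le(1+o(1))f(k)$, so $p_k\sim f(k)$.

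It remains to arrange the reciprocal sum. Because $p_k\sim f(k)$ and $1/f(t)$ is decreasing, $\sum_k 1/p_k\ll 1+\int_2^\infty dt/(t\log t\cdot L(t))<\infty$, so the tails of this convergent series tend to $0$; fix $m$ with $\sum_{k>m}1/p_k<\tfrac12$. The sequence demanded by the proposition is the relabeled tail $\widetilde p_k:=p_{m+k}$ for $k\ge1$: these are primes, strictly increasing, with $\sum_{k\ge1}1/\widetilde p_k=\sum_{k>m}1/p_k<\tfrac12$, and $\widetilde p_k=p_{m+k}\sim f(m+k)\sim f(k)=k\log k\cdot L(k)$ as $k\to\infty$ — the last step because $m$ is fixed and $f(m+k)/f(k)=\tfrac{m+k}{k}\cdot\tfrac{\log(m+k)}{\log k}\cdot\tfrac{L(m+k)}{L(k)}\to1$, the third factor being squeezed between $1$ and $L(2k)/L(k)\to1$ once $k\ge m$.

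The main obstacle is the upper bound $p_k\le(1+o(1))f(k)$: one must show that the overshoot accumulated while the recursion is forced to take consecutive primes (across unusually long prime gaps) is negligible against $f(k)$. This is exactly where $L(k)=k^{o(1)}$ (Lemma~\ref{bound on L lemma}) and $L(k)\to\infty$ are indispensable: together they give $\log f(k)\sim\log k$ and $k\log k=o(f(k))$, so that both the individual prime gaps near $f(k)$ and the cumulative correction $O(k\log k)$ are dwarfed by $f(k)$ itself. The remaining ingredients — the prime number theorem in the soft forms used above and the Chebyshev-type estimate $p_m\ll m\log m$ — are routine.
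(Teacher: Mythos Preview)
Your argument is correct, but it takes a noticeably different route from the paper's. The paper sidesteps the recursive construction entirely: it simply sets $q_k$ equal to the $\lfloor kL(k)\rfloor$-th prime (for $k$ large enough that $L(k)\ge1$). Monotonicity is then immediate from $\lfloor (k+1)L(k+1)\rfloor>\lfloor kL(k)\rfloor$, and the asymptotic $q_k\sim kL(k)\log(kL(k))\sim k\log k\cdot L(k)$ drops straight out of the prime number theorem together with Lemma~\ref{bound on L lemma}. Your approach instead takes the least prime at or above the target $f(k)$, with a recursive correction to force strict increase; this is natural but forces you to control the ``consecutive primes'' stretches, which is where the work in your middle paragraph lives. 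That step is fine---the key estimate $p_k-p_{k_0}=o(f(k))$ follows from $\pi(p_k)-\pi(p_{k_0})=k-k_0\le k$ together with the crude bound $p_k\ll f(k)$ (which you essentially derive), giving $(p_k-p_{k_0})/\log p_k\le k+o(f(k)/\log k)$ and hence $p_k-p_{k_0}\ll k\log k+o(f(k))=o(f(k))$---but the write-up compresses this enough that a reader has to reconstruct it. What your approach buys is the exact inequality $p_k\ge f(k)$ rather than just an asymptotic; what the paper's approach buys is a two-line verification of $p_k\sim f(k)$ with no case analysis. The tail-shift step and the use of the integral hypothesis are identical in both proofs.
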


\begin{proof}
Choosing $y_0$ so that $L(y)\ge1$ holds for all $y\ge y_0$, define
\[
q_k = \begin{cases}
\text{the $k$th prime}, &\text{if } k < y_0, \\
\text{the $\lfloor k L(k) \rfloor$th prime}, &\text{if } k\ge y_0.
\end{cases}
\]
Then $\{q_k\}$ is increasing since
$(k+1)L(k+1) \ge (k+1)L(k) \ge kL(k)+1$ for $k\ge y_0$, so that 
$\lfloor (k+1)L(k+1) \rfloor > \lfloor kL(k) \rfloor$. 
By the prime number theorem, when $k\to\infty$ we have
\[
q_k \sim \lfloor kL(k) \rfloor \log \lfloor kL(k) \rfloor \sim kL(k) (\log k + \log L(k)) \sim kL(k)\log k,
\]
where the last asymptotic equality used Lemma~\ref {bound on L lemma}. 
Further, 
\[
\sum_{k\ge y_0+1} \frac1{q_k}\ll\sum_{k\ge y_0+1}\frac1{k\log k\cdot L(k)} 
< \int_{y_0}^\infty \frac{dt}{t\log t\cdot L(t)}
\]
which converges; thus there is some nonnegative integer $k_0$ such that
$\sum_{k>k_0}1/q_k<1/2$. Then the sequence $\{p_k\}$ defined by  $p_k=q_{k_0+k}$ has the required properties.
\end{proof}

\noindent{\it Proof of Theorem \ref{main}}.  Note that if $c>0$ is fixed,
\[
p_{\lfloor c\log_2x\rfloor} \sim c\log_2 x \cdot \log_3 x 
\cdot L\big( c \log_2x \big) \sim c \log_2x \cdot \log_3x \cdot L(\log_2 x)
\]
by the slowly varying property of $L$.  Applying this with $c=\frac12$
and $c=\frac32$, together with 
Proposition~\ref{sum over k prop}, proves Theorem~\ref{main}. \hfill$\Box$

\section{Proof of Theorem 2}

For every positive integer $j$, define
\[
\A_j = \big\{ a\in\N \colon 2^{2^j} < a \le 2^{2^{j+1}},\, 2^j \parallel a \big\},
\]
and define $\A = \bigcup_{j=1}^\infty \A_j$ (a disjoint union). It is
clear that $A(x)\asymp x/\log x$, so that $\A$ has density 0 and
the two assertions in \eqref{eq-sums} hold.
It remains to show that if $\S$ is a primitive subset of $\A$, then 
\eqref{eq-primsum} holds.

Let $\S\subset\A$ be primitive. For each natural number $s$,
define $s^\circ$ to be the largest odd divisor of $s$, 
and define $\S^\circ = \{ s^\circ\colon s\in\S\}$.

\begin{lemma}
If $s_1,s_2\in\S$ are distinct, then $s_1^\circ \nmid s_2^\circ$. In particular, $\S^\circ$ is also primitive, and the map $s\mapsto s^\circ$ is a bijection between $\S$ and $\S^\circ$.
\end{lemma}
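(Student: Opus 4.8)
The plan is to suppose, for distinct $s_1,s_2\in\S$, that $s_1^\circ\mid s_2^\circ$, and to derive a contradiction with the primitivity of $\S$. Write $s_1\in\A_{j_1}$ and $s_2\in\A_{j_2}$, so that $s_i=2^{j_i}s_i^\circ$ with each $s_i^\circ$ odd. The one structural fact I would record first is that membership of an element in $\A_j$ confines its odd part to the interval $\bigl(2^{2^j-j},\,2^{2^{j+1}-j}\bigr]$, since that element lies in $(2^{2^j},2^{2^{j+1}}]$ and is exactly divisible by $2^j$. The argument then proceeds by comparing $j_1$ with $j_2$ in three cases.

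If $j_1\le j_2$, then $2^{j_1}\mid 2^{j_2}$, so $s_1=2^{j_1}s_1^\circ\mid 2^{j_2}s_2^\circ=s_2$; since $s_1\ne s_2$, this already violates primitivity. If $j_1\ge j_2+2$, I would argue purely by size: $s_1^\circ>2^{2^{j_1}-j_1}\ge 2^{2^{j_1-1}}\ge 2^{2^{j_2+1}}\ge s_2\ge s_2^\circ$, where the middle steps use the elementary bound $2^j-j\ge 2^{j-1}$ and the inequality $j_1-1\ge j_2+1$; but $s_1^\circ\mid s_2^\circ$ forces $s_1^\circ\le s_2^\circ$, a contradiction. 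The remaining case $j_1=j_2+1$ is the delicate one, because the intervals housing $s_1^\circ$ and $s_2^\circ$ overlap and the size bounds leave a factor of two of slack. Here $s_1^\circ\mid s_2^\circ$ gives $s_1^\circ\le s_2^\circ$, and combining the lower bound $s_1^\circ>2^m$ (from $s_1\in\A_{j_2+1}$) with the upper bound $s_2^\circ\le 2^{m+1}$ (from $s_2\in\A_{j_2}$), where $m=2^{j_2+1}-j_2-1$, places both $s_1^\circ$ and $s_2^\circ$ in the single dyadic interval $(2^m,2^{m+1}]$. A proper divisor of an element of such an interval is at most half as large and so leaves the interval; hence $s_1^\circ=s_2^\circ$, which gives $s_1=2^{j_2+1}s_1^\circ=2s_2$, and once again $s_2\mid s_1$ contradicts primitivity.

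Granting the displayed divisibility statement, the two ``in particular'' assertions are immediate. If $\S^\circ$ were not primitive there would be distinct $t_1,t_2\in\S^\circ$ with $t_1\mid t_2$; writing $t_i=s_i^\circ$ with $s_i\in\S$, the $s_i$ are distinct (equal $s$'s yield equal $s^\circ$'s) and $s_1^\circ\mid s_2^\circ$, contradicting the first part. For the bijection, $s\mapsto s^\circ$ is surjective onto $\S^\circ$ by definition, and it is injective because $s_1^\circ=s_2^\circ$ is in particular the divisibility relation $s_1^\circ\mid s_2^\circ$, again impossible for distinct $s_1,s_2$. The only genuine obstacle is the case $j_1=j_2+1$: crude magnitude comparisons do not close the argument there, and one has to use both the precise dyadic spacing of the blocks $\A_j$ (consecutive blocks overlap, in the odd-part variable, by exactly a factor of $2$) and primitivity in the sharp form forbidding one element from being twice another.
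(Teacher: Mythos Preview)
Your proof is correct and follows essentially the same approach as the paper's: reduce to $s_1^\circ=s_2^\circ$ via the dyadic spacing of the odd parts, and then contradict primitivity through $s_2\mid s_1$. The only difference is cosmetic: where you split the range $j_1\ge j_2+1$ into the two subcases $j_1\ge j_2+2$ and $j_1=j_2+1$, the paper handles both at once by observing that $2^k-(k-1)$ is increasing in $k\ge1$, so that $s_2^\circ\le 2^{2^{j_2+1}-j_2}\le 2^{2^{j_1}-(j_1-1)}<2s_1^\circ$ already for every $j_1\ge j_2+1$.
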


\begin{proof}
Suppose, for the sake of contradiction, that $s_1^\circ \mid s_2^\circ$. 
Choose $j_1,j_2\in\N$ so that $s_1\in\A_{j_1}$ and $s_2\in\A_{j_2}$. 
Since $s_1=2^{j_1}s_1^\circ$ and $s_2=2^{j_2}s_2^\circ$, the fact that $s_1\nmid s_2$ (by primitivity of $\S$) forces $j_1 \ge j_2+1$. But then
\[
s_1^\circ = \frac{s_1}{2^{j_1}} > 2^{2^{j_1}-j_1}
\]
and
\[
s_2^\circ = \frac{s_2}{2^{j_2}} \le 2^{2^{j_2+1}-j_2} \le 2^{2^{j_1}-(j_1-1)},
\]
since the expression $2^k-(k-1)$ is an increasing function for $k\ge1$. In particular, $s_2^\circ < 2s_1^\circ$, and so the divisibility relation $s_1^\circ \mid s_2^\circ$ forces $s_1^\circ = s_2^\circ$. But then $s_2\mid s_1$, contradicting the primitivity of~$\S$.

This shows that $s_1^\circ \nmid s_2^\circ$. The symmetric argument shows that $s_2^\circ \nmid s_1^\circ$, and so $\S^\circ$ is indeed primitive. Also, $s_1^\circ \nmid s_2^\circ$ implies that $s_1^\circ \ne s_2^\circ$, which shows that the map $s\mapsto s^\circ$ is a bijection between $\S$ and $\S^\circ$.
\end{proof}

If $s\in\A_j$ then $s^\circ = s/2^j$, and also $2^j \ge (\log s)/(2\log 2)$ by the upper bound on elements of $\A_j$; these relations imply that
\[
s^\circ \log s^\circ = \frac s{2^j}\log\frac s{2^j} \le \frac{2s\log2}{\log s} \log\frac {2s\log2}{\log s} \ll s.
\]
Therefore
\[
\sum_{s\in\S} \frac1s \ll \sum_{s^\circ \in\S^\circ} \frac1{s^\circ \log s^\circ}
\]
(using the injectivity of $s\mapsto s^\circ$). 
However, $\S^\circ$ is primitive, and so the last sum is convergent by \eqref{eq-nlogn}.  
This proves \eqref{eq-primsum}.

\subsection*{Acknowledgements} The first author was supported in part by 
grants from the Natural Sciences and Engineering Research Council of Canada.
The second author was supported in part by NSF grant DMS-0703850.

\bigskip
\noindent
\begin{minipage}[t]{4in}
{Greg Martin\\
Department of Mathematics\\
University of British Columbia\\ 
Room 121, 1984 Mathematics Road \\ 
Vancouver, Canada V6T 1Z2\\
\email{gerg@math.ubc.ca}}
\end{minipage}
\begin{minipage}[t]{2.4in}
{Carl Pomerance\\
Department of Mathematics\\
Dartmouth College\\
Hanover, NH 03755, USA\\
\email{carl.pomerance@dartmouth.edu}}
\end{minipage}

\end{document}